\newcommand\R{\mathbb{R}}
\newcommand\Z{\mathbb{Z}}
\newcommand\pr{\textup{pr}}
\newcommand\acs{\varphi}  				
\newcommand\scxs{\zeta}					
\newcommand{\trivR}{\underline{\R}}
\newtheorem{Theorem}{Theorem}[section]
\newtheorem{Proposition}[Theorem]{Proposition}
\theoremstyle{remark}
\newtheorem{Example}[Theorem]{Example}
\newtheorem{Remark}[Theorem]{Remark}
\begin{document}

\title{Contact structures on $M \times S^2$}
\author{Jonathan Bowden}
\address{Mathematisches Institut, Universit\"{a}t Augsburg, Universit\"{a}tstr 14, 86159 Augsburg, Germany}
\email{jonathan.bowden@math.uni-augsburg.de}
\author{Diarmuid Crowley}
\address{Max Planck Institut f\"ur Mathematik, Vivatsgasse 7, D-53111 Bonn, Germany}
\email{diarmuidc23@gmail.com}
\author{Andr\'{a}s I. Stipsicz}
\address{ R\'{e}nyi Institute of Mathematics, Re\'{a}ltanoda u. 13-15., Budapest, Hungary H-1053}
\email{stipsicz@renyi.hu}

\begin{abstract}
	We show that if a manifold $M$ admits a contact structure, then so does $M \times S^2$.
	Our proof relies on surgery theory, a theorem of Eliashberg on contact surgery and a theorem of Bourgeois 	showing that if $M$ admits a contact structure then so does $M \times T^2$.
\end{abstract}

\maketitle


\section{Introduction} \label{sec:introduction} %
One of the most important questions in contact topology is to
determine which odd dimensional oriented manifolds admit contact structures.
Recall that a (positive, coorientable) contact structure on an
oriented manifold $M$ of dimension $2q+1$ is a hyperplane distribution
$\xi \subset TM$ which can be given as $\ker \alpha $ for a 1-form
$\alpha \in \Omega ^1 (M)$ satisfying
\[
\alpha \wedge (d\alpha )^q >0.
\]
The 2-form $d \alpha$ defines a symplectic form on $\ker \alpha$,
which determines an almost complex structure $J$ on the sub-bundle $\xi \subset TM$,
unique up to contractible choice.  
Therefore the existence of a contact structure implies that $TM$
decomposes as the sum of a $q$-dimensional complex bundle and a trivial
real line bundle. 
The pair $(\xi \subset TM, J)$ is called an {\em almost contact structure} on $M$.
It is equivalent to a reduction of the structure group of $TM$ from $SO(2q+1)$ to $U(q)\times 1$. 
%
%
Now the above existence question can be refined as follows: Which almost contact 
manifolds admit contact structures?

The answer to this question is positive for open manifolds (by an
application of Gromov's $h$-principle), in dimension three (by Lutz
\cite{Lutz} and Martinet \cite{Martinet}) and in dimension five (by
Casals-Presas-Pancholi \cite{CPP} and Etnyre \cite{Etnyre}).  
(For further results see \cite{geiges}.) 
Less is
known for higher dimensional closed manifolds, but so far no example of
an almost contact manifold with no contact structure has been found.
 According to a beautiful
result of Bourgeois \cite{bourgeois}, for a closed contact manifold $(M, \xi)$
the product $M\times \Sigma _g$ also admits a contact structure
provided $g\geq 1$. (Here $\Sigma _g$ denotes the 
 closed orientable surface of genus $g$.) This construction relies on
the theory of compatible open book decompositions of Giroux-Mohsen \cite{Giroux},
and provides a contact structure on $M\times T^2$ with the property that
for each $p\in T^2$ the submanifold $M\times \{ p\}\subset M\times T^2$
is contact and indeed contactomorphic to $(M, \xi)$.

The purpose of the present article is to prove that the result of Bourgeois
holds for $g=0$ as well. (The $g=0$ case is expected to play a key role in 
the positive resolution of the  general existence problem.)

\begin{Theorem}\label{thm:main}
Suppose that $(M, \xi)$ is a closed, contact manifold. Then, 
the product $M\times S^2$ admits a contact structure.
\end{Theorem}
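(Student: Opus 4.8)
The plan is to leverage the two black-box inputs advertised in the abstract—Bourgeois's theorem producing a contact structure on $M \times T^2$, and Eliashberg's contact surgery theorem—and to bridge between $M \times T^2$ and $M \times S^2$ using surgery theory. The key geometric observation is that $S^2$ is obtained from $T^2$ by surgery: attaching a $2$-handle along a nonseparating simple closed curve $\gamma \subset T^2$ (equivalently, performing a $0$-surgery) yields $S^2$. I would like to realize the induced cobordism, crossed with $M$, as a sequence of contact surgeries on $M \times T^2$, so that Eliashberg's theorem guarantees the resulting manifold $M \times S^2$ is contact.

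\smallskip

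The first step is to make the curve $\gamma \subset T^2$ isotropic for the Bourgeois contact structure, or more precisely to arrange that $M \times \gamma \subset M \times T^2$ is an isotropic (indeed $\xi$-isotropic or Legendrian-type) submanifold along which a contact surgery can be performed. Here the special structure of Bourgeois's construction is essential: for each $p \in T^2$ the slice $M \times \{p\}$ is contactomorphic to $(M, \xi)$, and this product-like behaviour in the $T^2$ directions should let me understand the contact framing of $M \times \gamma$ explicitly and compare it to the surgery framing needed to cap off the $\gamma$ factor and replace $T^2$ by $S^2$. The second step is to verify that the relevant framing data and the almost contact (formal) data match up: surgery changes the underlying smooth manifold as desired, but to invoke Eliashberg I must check that the surgery is contact-admissible, meaning the attaching region carries the correct isotropic setup and that the surgered almost contact structure extends over the trace of the cobordism. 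This is where surgery theory enters—to track the tangential/normal bundle data and confirm the formal homotopy-theoretic conditions are satisfied so that the almost contact structure on $M \times T^2$ extends to an almost contact structure on the cobordism to $M \times S^2$.

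\smallskip

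The main obstacle I anticipate is precisely this compatibility of framings: a naive surgery on $M \times \gamma$ realizes the smooth manifold $M \times S^2$ only if the normal framing of $M \times \gamma$ used in the contact surgery agrees with the product framing coming from $\gamma \subset T^2$, up to the ambiguity that Eliashberg's theorem permits. Contact (Weinstein) surgery along an isotropic sphere requires a specific framing, and there is no a priori reason that the contact framing equals the framing that produces $S^2$; reconciling them, or correcting the discrepancy by an appropriate choice of isotropic embedding or by an additional stabilization/cancellation of handles, is the technical heart of the argument. I expect the resolution to combine a careful analysis of the characteristic foliation and contact framing of $M \times \gamma$ in the Bourgeois structure with obstruction-theoretic input from surgery theory to certify that the formal data assemble correctly, after which Eliashberg's theorem upgrades the formal contact structure to a genuine one on $M \times S^2$.
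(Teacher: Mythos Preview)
Your high-level strategy---Bourgeois to get a contact structure on $M\times T^2$, then an almost complex cobordism to $M\times S^2$ with small-index handles, then Eliashberg---is exactly the paper's. The gap is in how you propose to produce the cobordism and its handle decomposition.

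The submanifold $M\times\gamma\subset M\times T^2$ cannot be made isotropic: in a contact $(2q+3)$-manifold an isotropic submanifold has dimension at most $q+1$, while $M\times\gamma$ has dimension $2q+2$. Moreover $M\times\gamma$ is not a sphere, so Weinstein handle attachment does not even formally apply to it. Relatedly, the $2$-handle in the elementary cobordism $W$ from $T^2$ to $S^2$ does \emph{not} become a single handle after crossing with $M$: the piece $M\times D^2\times D^1$ is not a standard handle, so the product cobordism $M\times W$ has no evident handle decomposition (relative to $M\times T^2$) with indices $\le q+2$. Your framing discussion is therefore addressing the wrong obstacle; the real issue is that the naive product cobordism is not $(q+1)$-connected over $M\times S^2$, since $W\simeq S^1\vee S^2$.

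What the paper does instead is take the product cobordism $M\times W$ together with its degree-one map $g\colon M\times W\to M\times S^2$ covered by a stable tangent bundle map, and perform surgery \emph{in the interior} of $M\times W$ (below the middle dimension, in the style of Kreck) to kill the kernel of $g_*$ on homotopy groups up to $\pi_{q+1}$ and make $g$ surjective on $\pi_{q+2}$. The resulting cobordism $Y$ still has boundary $-(M\times T^2)\sqcup(M\times S^2)$ and still carries the correct stable complex structure, but now the inclusion $M\times S^2\hookrightarrow Y$ is a $(q+1)$-equivalence. Wall's geometric connectivity theorem then gives a handle decomposition of $Y$ on $M\times T^2$ with handles of index $\le q+2$; the stable complex structure destabilizes over these handles, and Eliashberg's theorem applies. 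The surgery-theoretic step of improving $M\times W$ to $Y$ is the essential content you are missing.
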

\noindent Furthermore, with a little more care we prove a
relative version of this result, which answers a question posed by
F. Presas:
\begin{Theorem}\label{thm:main_rel}
Suppose that $(M, \xi)$ is a closed, contact manifold and let $p \in
S^2$. Then the product $M\times S^2$ admits a contact structure such
that the submanifold $M\times \{ p\}$ is contact and the natural map
to $(M, \xi)$ is a contactomorphism.
\end{Theorem}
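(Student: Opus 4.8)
The plan is to deduce the relative statement from Bourgeois' theorem by converting the torus factor into a sphere through contact surgery, while leaving one slice $M \times \{p\}$ untouched. First I would apply Bourgeois' theorem to equip $M \times T^2$ with a contact structure $\eta$ for which every slice $M \times \{q\}$, $q \in T^2$, is a contact submanifold whose projection to $(M, \xi)$ is a contactomorphism. The topological input is that $S^2$ is obtained from $T^2$ by a genus-reducing surgery: remove an annular neighbourhood $S^1 \times (-1,1)$ of a non-separating simple closed curve and glue in two discs. Crossing with $M$, the manifold $M \times S^2$ is obtained from $M \times T^2$ by surgery along the submanifold $M \times S^1$; dually, $M \times T^2$ is recovered from $M \times S^2$ by attaching a handle along $M \times S^0$. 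I would arrange the curve, hence the surgery region, to lie in $T^2 \setminus D^2$ for a disc $D^2$ containing the marked point, so that the fibre over $p$ is disjoint from every modification. The target almost contact structure is the evident one on $M \times S^2 = M \times \C P^1$, namely the complex bundle $\xi \oplus TS^2$ of rank $q+1$ together with a trivial line, and the whole construction must reproduce it.

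The difficulty is that the relevant surgery is performed along $M \times S^1$, which is not a sphere, whereas Eliashberg's contact surgery theorem produces contact structures only for surgeries along isotropic spheres. The naive move of realising $M \times S^1$ as an isotropic submanifold and attaching a single Weinstein handle is therefore unavailable. Instead I would replace the product surgery by a sequence of genuine contact surgeries along isotropic spheres: since $\dim(M \times T^2) = 2(q+1)+1 \geq 3$, there are isotropic circles $\{m\} \times \gamma$, and attaching Weinstein handles along these and along further isotropic spheres in the complement of $M \times \{p\}$ builds a Weinstein cobordism whose concave end is $(M \times T^2, \eta)$ and whose convex end is a new contact manifold $(N, \eta')$. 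Each step is legitimate by Eliashberg and each is supported away from the fixed fibre.

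The main obstacle, and the point where surgery theory enters, is to guarantee that $N$ is actually diffeomorphic to $M \times S^2$ and not merely to some manifold in the same normal cobordism class. I would formulate this as a surgery problem: the handle attachments determine a normal cobordism together with a degree-one normal map to $M \times S^2$, and the task is to arrange this map to be a diffeomorphism. The available flexibility lies in the choice of isotropic spheres and their framings, which is governed by the $h$-principle for isotropic embeddings; this freedom should be exploited to realise the required normal invariant and to annihilate the surgery obstruction in the relevant $L$-group, thereby identifying $N$ with $M \times S^2$. Carried alongside this is the bookkeeping of the almost contact data: one must check that the almost contact structure underlying $\eta'$ is homotopic to the product structure described above, so that the contact structure produced genuinely lives on $M \times S^2$ with the expected formal homotopy type.

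Finally, for the relative conclusion I would note that, because every surgery was supported in $M \times (T^2 \setminus D^2)$, the contact structure is unchanged on $M \times D^2$; for $p$ in the disc $D^2 \subset S^2$ the slice $M \times \{p\}$ persists as a contact submanifold, and Bourgeois' normalisation identifies it contactomorphically with $(M, \xi)$. This localisation of the entire construction away from a single fibre is precisely the extra care required beyond the absolute statement.
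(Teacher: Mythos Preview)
Your overall architecture coincides with the paper's: start from Bourgeois' contact structure on $M\times T^2$ (in which every fibre $M\times\{q\}$ is contactomorphic to $(M,\xi)$), produce a cobordism to $M\times S^2$ built from handles of index at most $q+2$, realise those handles as Weinstein handles via the $C^0$-small $h$-principle for isotropic embeddings so that all attaching spheres avoid a fixed fibre $M\times\{p\}$, and conclude that the untouched slice survives as a contact submanifold contactomorphic to $(M,\xi)$. The relative localisation step is handled in the paper exactly as you describe.

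The gap is in how you manufacture the cobordism. You propose to attach Weinstein handles to $M\times T^2$ and then invoke the full surgery machine, with its obstruction in $L_{2q+3}(\Z[\pi_1 M])$, to force the convex end to be diffeomorphic to $M\times S^2$. Two problems arise. First, you give no reason why this $L$-group obstruction should vanish for an arbitrary fundamental group $\pi_1 M$; it need not. Second, even when the obstruction vanishes, the output of the surgery programme is a manifold \emph{homotopy equivalent} to $M\times S^2$, and you offer no mechanism to promote this to a diffeomorphism. The paper sidesteps both issues by a different manoeuvre: rather than assembling the cobordism from the $M\times T^2$ side and hoping to hit $M\times S^2$, it begins with the explicit cobordism $M\times W$, where $W=(D^2\times S^1)\setminus \mathrm{int}\,D^3$ (or its arc-punctured version $\widehat W$ in the relative case), whose two boundary components are already $M\times T^2$ and $M\times S^2$, and which carries a degree-one map $g\colon M\times W\to M\times S^2$ restricting to a diffeomorphism on the $M\times S^2$ end. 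One then performs surgery on the \emph{interior} of this $(2q+4)$-dimensional cobordism, and only below the middle dimension, to make $g$ a $(q+2)$-equivalence; here no $L$-theoretic obstruction is ever encountered. Wall's geometrical connectivity theorem then converts this homotopy information into the required handle decomposition with indices at most $q+2$. The key point you are missing is that fixing the target boundary from the outset reduces the problem to surgery below the middle dimension, where all obstructions are absent.
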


\begin{Remark} \label{rem:generalisation}
The proofs of Theorems \ref{thm:main} and \ref{thm:main_rel} generalise so that we may replace
$S^2$ with other even-dimensional manifolds including any
even-dimensional sphere; see Theorem \ref{thm:further}.
\end{Remark}
\begin{Remark} \label{rem:HW}
Theorem \ref{thm:main} has also been obtained independently in \cite{HW} using different methods.  
\end{Remark}

The idea of the proof is the following: fix a contact structure on
$M^{2q+1}$ and consider the contact structure on $M\times T^2$
provided by the construction of Bourgeois. Let the corresponding
almost contact structure be denoted by $\varphi$. Then we claim that
there is a smooth $(2q+4)$-dimensional cobordism $Y$ from $M\times T^2$ to $M\times S^2$
which admits an almost complex structure extending $\varphi$ and a
Morse function with critical points of indices $\leq q+2$.  By work of
Eliashberg and Weinstein \cite{Eliashberg, Weinstein}, for $q\geq 1$
such a cobordism gives rise to a sequence of contact surgeries on $M
\times T^2$, inducing a contact structure on $M\times S^2$.  The
existence of the cobordism $Y$, on the other hand, can be naturally
studied in the framework of stable complex surgery.

\bigskip

{\bf {Acknowledgements:}} The authors would like to thank the Max
Planck Institute for Mathematics in Bonn for its hospitality while
parts of this work has been carried out, and Hansj\"org Geiges for
useful comments on an earlier draft of the paper.  AS was partially supported
by OTKA NK81203, by the \emph{Lend\"ulet program} of the Hungarian
Academy of Sciences and by ERC LDTBud.
The present work is part of the authors' activities within CAST, a
Research Network Program of the European Science Foundation.

\section{Preliminaries} \label{sec:complex}
Let $\alpha$ be a contact form on a closed $(2q+1)$-manifold $M$ with
associated contact structure $\xi$ and let $\acs$ be the induced
almost contact structure. Then $\varphi$
naturally induces a stable complex structure $\zeta_\acs$ on the
stable tangent bundle $\tau_M : = TM \oplus \trivR^k$ of $M$ (where
$\trivR^k$ denotes the trivial real $k$-plane bundle).

Define the manifold $W$ as $(D^2 \times S^1)-{\rm Int}(D^3)$, the
solid 3-dimensional torus with a small open 3-ball removed from its
interior.  Observe that $W$ admits a map $c \colon W\to S^2$ which is
a diffeomorphism on the boundary component $\partial _1 W$
diffeomorphic to $S^2$ (and
has degree one on the boundary component 
$\partial _0 W$
diffeomorphic to $T^2$).

Consider the stable complex structure $\zeta$ on the solid torus
which comes from the splitting $T(D^2 \times S^1) = TD^2 \times TS^1$ and from a
choice of an almost complex structure on $TD^2$.  Let $\zeta_W:= \zeta|_{W}$ denote
the restriction of $\zeta$ to $W$, and let $\zeta _{S^2}$ and $\zeta
_{T^2}$ denote the stable complex structures induced by the complex 
structures on $S^2$ and $T^2$. Note that by construction $\zeta
_{T^2}$ is homotopic to the stabilisation of an almost complex structure on $T^2$. Since $W \simeq S^1 \vee S^2$, after a choice of trivialisation homotopy classes of stable complex
structures on $TW$ can be identified with $
H^2(S^1\vee S^2; \pi _2(SO/U))\cong \pi_2(SO/U)$; in particular, in follows that 
the pull-back stable complex structure $c^*(\scxs _{S^2})$ is
homotopic to $\scxs_W$.

We now consider a general stably complex $n$-manifold $(X, \scxs_X)$.
(In our immediate applications $(X, \zeta _X)$ 
will be either  $(W, \zeta _W)$, 
$(S^2, \zeta _{S^2})$ or  $(T^2, \zeta _{T^2})$; in Section~\ref{sec:further}
we will consider more general stably complex manifolds.)
The stable tangent bundle of $M \times X$ is the exterior Whitney sum 
\[ \tau_{M \times X} = \tau_M \times \tau_X , \]
and therefore it admits the stable complex structure $\zeta_\acs
\times \zeta_X$.  In particular,  the products $M\times W$,
$M\times S^2$ and $M\times T^2$
 admit the stable
complex structures $\zeta _{\acs}\times \zeta _W$,
$\zeta _{\acs}\times \zeta _{S^2}$ and $\zeta _{\acs}\times \zeta _{T^2}$.
We will view $M\times W$ as a cobordism from $M\times T^2$ to $M\times S^2$ 
so that as oriented manifolds 
\[ \partial (M \times W) = -(M \times T^2) \sqcup (M \times S^2),\]
where the orientations on all manifolds are those induced by the stable complex structure on $T(M \times W)$.
%

The map $g\colon M\times W \to M\times S^2$ given by ${\rm {id}}_M\times c$ is covered by a map of stable tangent bundles $\bar g \colon
\tau_{M \times W} \to \tau_{M \times S^2}$, and $g^*(\zeta
_{\acs}\times \zeta _{S^2})$ is homotopic to 
$\zeta _{\acs}\times \zeta_W$. The above manifolds and maps fit in the commutative diagram
\begin{equation}\label{eq:triangle}
\xymatrix{ M\times T^2 \ar[dr]_(0.45){f_0} \ar[r]^{i_{0}} & M\times W 
\ar[d]^(0.4){g} & M\times S^2 \ar[l]_(0.45){i_{1}} \ar[dl]^(0.45){f_1} \\ & M\times S^2, }
\end{equation}
where the maps $i_0,i_1$ are the embeddings of the boundary
components, $f_1={\rm {id}}_M\times c|_{\partial _1W}$ is a diffeomorphism and
$f_0={\rm {id}}_M\times c|_{\partial _0 W}$ has degree one.  In addition, the bundle
  map $\bar g$ above restricts to give bundle maps $\bar f_0 \colon
  \tau_{M \times T^2} \to \tau_{M \times S^2}$ and $\bar f_1 \colon
  \tau_{M \times S^2} \to \tau_{M \times S^2}$ covering $f_0$ and
  $f_1$ respectively. (As always, a bundle map is an
  isomorphism of a bundle with the pullback of the target bundle.)

\section{A contact structure on  $M\times S^2$} \label{sec:proof}

In this section we prove Theorems \ref{thm:main} and \ref{thm:main_rel};
the proofs will be simple consequences of Propositions \ref{prop:first}, \ref{prop:second}, 
\ref{prop:bour} and \ref{prop:wein} below.

Our first proposition, Proposition \ref{prop:first}, is an analogue of Kreck's \cite[Proposition 4]{Kreck99}.  Whereas Kreck works with bundle maps from the stable normal
  bundle, we work with bundle maps from the stable tangent bundle, since this better reflects the
  contact geometry involved.
  The modifications from stable normal surgery to stable tangential surgery are standard: for
  example, stable tangential surgery is treated in \cite[Theorem 3.59]{Lueck} in the case where the target of the
  surgery is a Poincar\'{e} pair.  However, the techniques for surgery below the middle dimension,
  which are all that we use, do not rely on the target being a Poincar\'{e} pair.
  For the sake of completeness, we give the proof which involves making minor modifications to the 
  proof of \cite[Proposition 4]{Kreck99} which arise in the stable tangential setting.  Recall that $M$ is a closed smooth $(2q+1)$-dimensional
manifold, hence $M\times W$ is a compact $(2q+4)$-manifold with boundary.
  
\begin{Proposition}\label{prop:first}
  The manifold $M\times W$ can be modified by a finite sequence of
  surgeries in its interior to obtain a manifold $Y$ with the
  following properties:
\begin{itemize}
\item $Y$  fits into the following commutative diagram:
\begin{equation}\label{eq:trianglewithy}
\xymatrix{ M\times T^2 \ar[dr]_(0.45){f_0} \ar[r]^(0.6){i_{0}} & Y 
\ar[d]^(0.4){g_Y} & M\times S^2 \ar[l]_(0.6){i_{1}} \ar[dl]^(0.45){f_1} \\ & M\times S^2.} 
\end{equation}
\item The map $g_Y$ is a $(q+2)$-equivalence, that is,
$(g_Y)_* \colon \pi _i (Y)\to \pi _i(M\times S^2)$
is an isomorphism  for $i\leq q+1$ and a surjection for $i=q+2$.
\item There is a bundle map $\bar g_Y \colon \tau_Y \to \tau_{M \times S^2}$ covering $g_Y$
which restricts to the bundle maps $\bar f_0$ and $\bar f_1$ on the boundary of $Y$.
Hence $Y$ admits a stable complex structure $\scxs_Y$ such that
$(Y, \scxs_Y)$ is a stable complex bordism from $(M \times T^2, \scxs_{\acs} \times  \scxs_{T^2})$ to $(M \times S^2, \scxs_{\acs} \times  \scxs_{S^2})$.
\end{itemize}
\end{Proposition}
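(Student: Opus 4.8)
The plan is to transcribe Kreck's argument for surgery below the middle dimension \cite[Proposition 4]{Kreck99} from the stable normal to the stable tangential setting, carrying out all surgeries in the interior of $M\times W$. Write $X=M\times W$ and $Z=M\times S^2$, so $n:=\dim X=2q+4$ and the middle dimension is $q+2=n/2$. I would raise the connectivity of $g$ one degree at a time, keeping at each stage a bundle map $\bar g\colon\tau_X\to\tau_Z$ that covers the modified map and restricts to $\bar f_0,\bar f_1$ on the boundary, which is never touched. The starting point is the fundamental group: $g_*\colon\pi_1(M\times W)=\pi_1(M)\times\Z\to\pi_1(M)=\pi_1(M\times S^2)$ is the projection — using $\pi_1(W)\cong\Z$ and that $c_*$ vanishes on $\pi_1$ — with kernel the $\Z$ generated by the circle $S^1\subset W$, so a single interior surgery on this circle makes $g$ a $2$-equivalence.

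For the inductive step I assume $g$ is an $i$-equivalence with $2\le i\le q+1$. The relative Hurewicz theorem then identifies $\pi_{i+1}(g)$, the relative homotopy of the pair $(\mathrm{Cyl}(g),X)$, with a $\Z[\pi_1(Z)]$-module that is finitely generated by compactness of $X$, so finitely many surgeries suffice. Each generator is represented by a map $(D^{i+1},S^i)\to(\mathrm{Cyl}(g),X)$ whose restriction $S^i\to X$ has nullhomotopic composite with $g$; since $2i\le 2q+2<n$, general position replaces it by an embedded sphere $S^i\subset\mathrm{int}(X)$ with normal bundle $\nu$ of rank $n-i\ge q+3$. As $\bar g$ identifies $\tau_X|_{S^i}$ with the pullback of $\tau_Z$ along $g|_{S^i}$, which is trivial because $g|_{S^i}$ is nullhomotopic, and as $TS^i$ is stably trivial, the normal bundle $\nu$ is stably trivial; because $\mathrm{rank}\,\nu=n-i>i$ for every $i\le q+1$, this stable triviality upgrades to an honest trivialisation.

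I would then surger on each of these framed spheres. The disk $D^{i+1}\to Z$ extends the map over the trace of the surgery, while a trivialisation of $\nu$ compatible with the stable complex structure carried by $\bar g$ extends $\bar g$ over the trace. Since the surgeries lie in $\mathrm{int}(X)$, the maps $f_0,f_1$ and the bundle maps $\bar f_0,\bar f_1$ are unchanged, so the resulting manifold fits into the diagram \eqref{eq:trianglewithy} and the extended bundle map restricts correctly on the boundary. Surgery below the middle dimension kills the chosen generators of $\pi_{i+1}(g)$ without introducing new classes in lower degrees, giving an $(i+1)$-equivalence; iterating up to $i=q+1$ produces the $(q+2)$-equivalence $g_Y$ together with $\bar g_Y\colon\tau_Y\to\tau_Z$. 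Finally, pulling back the stable complex structure on $\tau_Z$ along $\bar g_Y$ equips $Y$ with $\scxs_Y$ and exhibits $(Y,\scxs_Y)$ as the asserted stable complex bordism.

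The only genuinely non-formal point, and the sole place where the tangential rather than normal formulation must be verified, is that the stable framing furnished by $\bar g$ can be taken to be an actual trivialisation of $\nu$ that is compatible with the stable complex structure and extends $\bar g$ across each trace. Below the middle dimension $\mathrm{rank}\,\nu$ exceeds $\dim S^i$, so stable triviality yields triviality and the space of compatible framings is nonempty and suitably connected; this is exactly the adaptation of \cite[Proposition 4]{Kreck99} to the stable tangential setting, and everything else is its standard machinery.
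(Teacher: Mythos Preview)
Your argument is correct and follows the same surgery-below-the-middle-dimension strategy as the paper's proof. The only cosmetic difference is that you kill generators of the relative group $\pi_{i+1}(g)$ directly, whereas the paper (following Kreck's formulation more literally) splits each inductive step into first killing $\ker\bigl((g_X)_*\colon\pi_i(X)\to\pi_i(B)\bigr)$ and then achieving surjectivity on $\pi_{i+1}$ via interior connected sums with twisted bundles $S^{m-i}\,\widetilde\times_{\alpha_j}\,S^{i+1}$; since such a connected sum is itself a surgery on a nullhomotopic $S^i$, the two presentations amount to the same operations.
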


\begin{proof}
 Let  $B := M\times S^2$, let $\tau_{B} \colon M \times S^2 \to BO$ be the classifying map of
  the stable tangential bundle of $B$ and let $g \colon M \times W \to M \times S^2$ be the map described in Section \ref{sec:complex}.
  
  We proceed by induction on homotopy groups $\pi_i$ starting from $g \colon M \times W \to M \times S^2$. Since both $M\times W$ and 
  $M\times S^2$ are connected, we have an isomorphism for $i=0$. 
  Let $\pi = \pi_1(M \times S^2) = \pi_1(M)$.  Note that $g_* \colon \pi_1(M \times W) \to \pi_1(M \times S^2)$
  is isomorphic to the projection $\pi \times \Z \to \pi$, hence $g_*$ is surjective on $\pi_1$.
Now consider the following commutative diagram:
\begin{equation}\label{eq:trianglewith}
\xymatrix{ &M\times S^2=B \ar[d]^{\tau_B} \\
X \ar[ur]^{g_X} \ar[r]^{\tau_X} & BO}
\end{equation}
where $X$ is a bordism from $M \times T^2$ to $M \times S^2$.
Suppose that the map $g_X$ induces an isomorphism between the homotopy groups 
 $\pi_j (X)\to \pi_j (B)$ for $j<i\leq q+1$ and a surjection on $\pi_i$. 
 We first kill the kernel of $(g_X)_* \colon \pi_i(X) \to \pi_i(B)$.
 Since $\pi_j (B, g_X(X))=0$ for $j<i$, by the Hurewicz Theorem we have 
 that $\pi_i (B, g_X (X)) \cong H_i (B, g_X (X); \Z[\pi])$, hence the 
 kernel of $(g_X)_*$ is finitely generated over $\Z[\pi]$.
 Suppose that $S^i \to X$ represents a generator of the kernel of $(g_X)_*$.
 For dimensional reasons we can assume that 
 $S^i$ is embedded. 
  For any $i$ the stable tangent bundle $\tau _{S^i}$ 
  is stably trivial and $\tau _X|_{S^i}$ is the pull-back 
 from $B$ along a homotopically trivial map, hence
$\tau_X|_{S^i} = \nu_{S^i\subset X} \oplus \tau_{S^i}$ implies that  the normal bundle $\nu_{S^i\subset X} $ of $S^i$ in $X$ is stably trivial. 
Since the rank of $\nu_{S^i \subset X}$ is greater than $i$,
 if follows that $\nu_{S^i \subset X}$ is trivial.
 In order to kill the class represented by $S^i$, we attach a $(2q+5)$-handle 
 to $X$ along $D^{m-i} \times S^i \subset X$,
 where $m : = 2q+4$. 
 For a particular choice of framing the map $g_{X}$ will extend over the attached handle in such a 
 way that   
 analogue of diagram \eqref{eq:trianglewith} above for the induced cobordism remains 
 commutative 
 \cite[Lemma 2 (ii)]{Kreck99}; that is, the bundle map $\bar g_X$ extends over the trace of the 
 surgery to classify the stable tangent bundle of this trace.
 Since we are free to choose the framing, we choose this particular one. After finitely many 
 surgeries we can kill the kernel on $\pi_i$ and maintain the stable tangential bundle maps.
 
Now we must arrange that the map $g_X$
is surjective on $\pi_{i+1}$ for $i \geq 1$.
Since $B$ is a finite $CW$-complex, $\pi_{i+1}(B)$ is finitely generated over $\Z[\pi]$.
 For each element $x_j$  of a generating set $\{x_1, \dots, x_k \}$ of the 
 cokernel of $(g_X)_* \colon \pi_{i+1}(X) \to \pi_{i+1}(B)$, 
 we consider a twisted bundle $S^{m-i}{\widetilde {\times}}_{\alpha_j} \thinspace S^{i+1}$, where
  the twisting $\alpha_j$ is determined by the image of $(g_X)_*(x_j)$ in $\pi_{i+1}(BO)$. 
  The map $g_X$ can be extended from $X$ to the interior connected sum of $X$ 
  with this twisted bundle in such a way that the commutativity of diagram~\eqref{eq:trianglewith}
  is preserved. As a result, we obtain a new map $g_X \colon X \to B$  such that $g_{X}$ induces a surjective map on 
  $\pi _{i+1}(X)$ and is covered by a map of the stable tangent bundle of $X$.
  Inductively repeating this procedure for $i\leq q+1$ we obtain a manifold 
  $Y$ and a map $g_Y\colon Y \to M\times S^2$ with the desired properties.
\end{proof}

Consider now the cobordism $Y$ between $M\times T^2$ and $M\times S^2$ given by Proposition \ref{prop:first}.
\begin{Proposition}\label{prop:second}
For $q\geq 1$ the cobordism $Y^{2q+4}$ admits a 
handle decomposition with handles of index at most $q+2$ attached to $(M\times T^2) \times [0,1]$.
\end{Proposition}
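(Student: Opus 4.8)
The plan is to prove the equivalent \emph{dual} statement by turning the cobordism upside down. Viewing $Y$ instead as a cobordism from $M\times S^2$ to $M\times T^2$, a handle decomposition built on $(M\times S^2)\times[0,1]$ using only handles of index $\geq q+2$ dualises, upon reversing the direction of the cobordism, to a handle decomposition of $Y$ built on $(M\times T^2)\times[0,1]$ with handles of index $\leq (2q+4)-(q+2)=q+2$, which is exactly the desired conclusion. (Here we use that reversing a cobordism, i.e. replacing a Morse function $f$ by $-f$, sends an index-$\lambda$ handle to an index-$(\dim Y-\lambda)$ handle.) So it suffices to produce a handle decomposition of $Y$ relative to $M\times S^2$ with handles of index $\geq q+2$; equivalently, to eliminate all handles of index $\leq q+1$, all of which lie strictly below the middle dimension $q+2$ of the $(2q+4)$-manifold $Y$.

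The key input is the connectivity of the inclusion $i_1\colon M\times S^2\hookrightarrow Y$. First I would observe that, since $f_1=g_Y\circ i_1$ is a diffeomorphism, the map $f_1^{-1}\circ g_Y\colon Y\to M\times S^2$ is a retraction, so $(i_1)_*$ is split injective on all homotopy groups. Combining this with the fact that $(g_Y)_*$ is an isomorphism for $i\leq q+1$ (Proposition \ref{prop:first}) and that $(g_Y)_*\circ(i_1)_*=(f_1)_*$ is an isomorphism, I conclude that $(i_1)_*\colon \pi_i(M\times S^2)\to \pi_i(Y)$ is an isomorphism for $i\leq q+1$. By the long exact sequence of the pair this shows that $(Y,M\times S^2)$ is $(q+1)$-connected, i.e. $\pi_i(Y,M\times S^2)=0$ for $i\leq q+1$; in particular $\pi_1(Y)\cong \pi_1(M\times S^2)=\pi$, where $\pi=\pi_1(M)$.

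Next I would start from an arbitrary handle decomposition of $Y$ relative to $M\times S^2$ and remove the handles of index $\leq q+1$ by the standard procedure for eliminating handles below the middle dimension. After rearranging the handles in order of increasing index, one first cancels the $0$-handles against $1$-handles (using that $M\times S^2$ meets every component and $Y$ is connected), then trades the remaining $1$-handles for $3$-handles, which is permissible because $(i_1)_*$ is an isomorphism on $\pi_1$ (valid since $q+1\geq 2$) and $\dim Y=2q+4\geq 6$. For each index $i$ with $2\leq i\leq q+1$ in turn, the vanishing of $\pi_i(Y,M\times S^2)$ — equivalently the acyclicity in degree $i$ of the relative handle chain complex over $\Z[\pi]$ (via the Hurewicz theorem modulo $\pi$, as already used in the proof of Proposition \ref{prop:first}) — lets the $i$-handles be cancelled against $(i+1)$-handles after introducing trivial complementary pairs if necessary. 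Since every such $i$ satisfies $2i+1< 2q+4$, the relevant belt and attaching spheres meet in the separating level in isolated points and can be brought to geometric cancelling position by general position and the Whitney trick (the levels have dimension $2q+3\geq 5$). Dualising the resulting decomposition, in which only handles of index $\geq q+2$ survive, then completes the proof.

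I expect the main obstacle to be the bookkeeping at the $\pi_1$-level, namely the elimination of the $0$- and $1$-handles: here handle trading raises rather than lowers the index, and the (possibly nontrivial) fundamental group $\pi$ must be carried through the argument. Once the decomposition is free of $0$- and $1$-handles, the cancellation of the $i$-handles for $2\leq i\leq q+1$ is routine, since we are comfortably below the middle dimension, all the relevant algebraic obstructions vanish, and the attaching spheres are null-homotopic and hence, after general position, embedded, disjoint, and geometrically cancellable.
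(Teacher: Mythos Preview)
Your proposal is correct and follows essentially the same route as the paper. Both arguments establish that $(Y, M\times S^2)$ is $(q{+}1)$-connected via the factorisation $f_1 = g_Y \circ i_1$ with $f_1$ a diffeomorphism and $(g_Y)_*$ an isomorphism on $\pi_i$ for $i\leq q{+}1$, then conclude that $Y$ admits a handle decomposition on $M\times S^2$ with handles of index $\geq q{+}2$ and dualise; the only difference is that the paper invokes Wall's theorem on geometrical connectivity directly, whereas you sketch its proof (handle trading and Whitney cancellation below the middle dimension).
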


\begin{proof}
In the terminology of \cite{Wall} we shall show that 
$Y$ (as a cobordism built on $M\times S^2$) is \emph{geometrically}
$(q+1)$-connected. Indeed, according to \cite[Theorem~3]{Wall}, this property 
follows once we can show that the cobordism is $(q+1)$-connected, that is,
the relative homotopy groups $\pi _i (Y, M\times S^2)$ vanish for
$i\leq q+1$. Notice, however, that the portion
\[
\xymatrix{  & Y 
\ar[d]^{g_Y} & M\times S^2 \ar[l]_{i_{1}} \ar[dl]^{f_1} \\ & M\times S^2} 
\]
of diagram~\eqref{eq:trianglewithy} implies that $(i_1)_* $ is an
isomorphism for $i\leq q+1$, since $(g_Y)_*$ is an isomorphism in all
these dimensions, and $f_1$ is a diffeomorphism.  The long exact
sequence of homotopy groups for the pair $(Y, M\times S^2)$ shows that
in the dimensions $i\leq q+1$ we have vanishing relative homotopy
groups and so by \cite[Theorem~3]{Wall}, $Y$ 
has a handle decomposition relative to $M \times S^2$
with handles of index $q+2$ and higher.  Hence $Y$ has a handle decomposition relative to 
$M \times T^2$ with handles of index at most $q+2$.
\end{proof}

\begin{Proposition}\label{prop:bour}
Suppose that $(M, \xi )$ is a contact manifold
and let $\pr _1\colon M\times T^2\to M$ and $\pr _2\colon M\times T^2\to T^2$
denote the projections. Then there is a
contact structure $\xi '$ on $M\times T^2$ such that, the induced 
almost contact structure is homotopic in $T(M\times T^2)$ to the complex sub-bundle 
$\pr _1^*(\xi, J)\oplus \pr _2^*(T(T^2),J_{T^2})$, where $J_{T^2}$ 
is an almost complex structure on $T^2$. In particular, the stable complex structure induced by 
$\xi'$ is homotopic to the stable complex structure 
$\scxs_\acs \times \scxs_{T^2}$ of Section \ref{sec:complex}.
\end{Proposition}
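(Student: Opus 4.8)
The plan is to invoke Bourgeois's theorem as the engine and then check that the explicit almost contact structure his construction produces is homotopic to the stabilisation of the stated complex sub-bundle. Bourgeois's construction in \cite{bourgeois} takes a contact form $\alpha$ on $M$ together with a compatible open book decomposition (guaranteed by Giroux--Mohsen \cite{Giroux}) and produces an explicit contact form on $M \times T^2$ of the shape $\alpha + \cos(\theta_1)\, d\theta_2 - \sin(\theta_1)\, \ldots$ in terms of the angular coordinates $\theta_1, \theta_2$ on $T^2$ adapted to the open book. So the first step is simply to quote that this contact structure $\xi'$ exists and has the property that each slice $M \times \{p\}$ is contactomorphic to $(M,\xi)$.

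The substantive step is the homotopy-theoretic one: I would write down the contact hyperplane field $\xi' = \ker \alpha'$ with its conformal symplectic structure $d\alpha'$, which determines an almost complex structure on the rank-$2q+2$ bundle $\xi'$, and compare it with the split complex bundle $\pr_1^*(\xi, J) \oplus \pr_2^*(T(T^2), J_{T^2})$. Both are complex sub-bundles of $T(M \times T^2)$ of complex rank $q+1$, and both split off the same trivial real line once stabilised. The key point is that the space of almost complex structures compatible with a given (or homotopic) conformal symplectic form is contractible, so I only need to produce a homotopy of the underlying complex bundle data, not match the forms on the nose. I would argue that along a one-parameter deformation interpolating between $d\alpha'$ restricted to $\xi'$ and the split symplectic form on $\pr_1^*\xi \oplus \pr_2^* T(T^2)$, the induced $J$'s vary continuously; the contactness is not needed for the almost-complex homotopy, only the ranks and the splitting off $T(T^2)$ agree. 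This directly yields that the almost contact structures are homotopic in $T(M \times T^2)$.

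The final clause is then immediate: stabilising a homotopy of complex sub-bundles of $TM \times TT^2$ yields a homotopy of the induced stable complex structures on $\tau_{M \times T^2}$, and by construction the stable complex structure associated to $\pr_1^*(\xi, J) \oplus \pr_2^*(T(T^2), J_{T^2})$ is exactly $\scxs_\acs \times \scxs_{T^2}$ as defined in Section~\ref{sec:complex}, since $\scxs_\acs$ is the stabilisation of $(\xi, J)$ and $\scxs_{T^2}$ is the stabilisation of $(T(T^2), J_{T^2})$.

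I expect the main obstacle to be pinning down precisely what Bourgeois's formula gives for the almost complex structure on $\xi'$ and verifying that it genuinely respects (up to homotopy) the product splitting, rather than mixing the $M$ and $T^2$ directions in an essential way. The construction uses the open book, so a priori the contact planes twist as one moves around $T^2$; the content is that this twisting is homotopically trivial as a complex-bundle deformation. Once the splitting is established at the level of homotopy classes of complex structures, the stabilisation statement is formal, so the geometric input is really confined to understanding Bourgeois's explicit form well enough to read off the rank-$(q+1)$ complex splitting.
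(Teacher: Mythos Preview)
Your outline is essentially the paper's approach, but you have left the one substantive step unexecuted and slightly mis-framed.  You propose to ``interpolate between $d\alpha'|_{\xi'}$ and the split symplectic form on $\pr_1^*\xi \oplus \pr_2^*T(T^2)$'', but as written these are forms on a priori different sub-bundles of $T(M\times T^2)$, so an interpolation of forms does not make sense until you have identified the bundles.  That identification is exactly the content you flag as the ``main obstacle'', and it is where the paper's proof does its work.

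The paper's observation is that, from Bourgeois's explicit form $\alpha' = \pr_1^*\alpha + f(r)(\cos\theta\, dx_1 + \sin\theta\, dx_2)$, one sees immediately that $\xi' \cap \pr_1^*(TM) = \pr_1^*\xi$ and that $d\alpha'$ restricts to $\pr_1^*(d\alpha)$ there (the $dx_i$ terms vanish on $\pr_1^*TM$).  Hence $\pr_1^*\xi$ sits inside $\xi'$ as a genuine symplectic sub-bundle, and taking its $d\alpha'$-symplectic complement $E$ gives a symplectic splitting $(\xi',d\alpha'|_{\xi'}) \cong (\pr_1^*\xi,\pr_1^*d\alpha) \oplus (E,d\alpha'|_E)$.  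Projection to the second factor identifies the rank-$2$ bundle $E$ with $\pr_2^*T(T^2)$.  Once you have this symplectic splitting, your contractibility-of-compatible-$J$'s argument finishes the proof exactly as you say.  So no abstract one-parameter deformation is needed: the splitting is already there in Bourgeois's formula, and recognising this is the whole point.
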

\begin{proof}
In \cite{bourgeois} a contact structure on $M\times T^2$
was given by the following formula: if $\xi $ on $M$ is defined as
$\ker \alpha$ then we define
\[
\alpha '=\pr _1^*(\alpha) + f(r)(\cos \theta dx_1+\sin \theta dx_2),
\]
where $\theta$ is the angular coordinate coming from an open book
decomposition of $M$ compatible with $\alpha$, $r$ is the radial
coordinate in a small neighbourhood of the binding of the open book, $f$ is a suitable
function and $x_1, x_2$ are coordinates on $T^2$.  
(For further details of this construction see \cite{bourgeois}.)
The contact
structure $\xi '=\ker \alpha '$ intersects the sub-bundle $\pr_1^*(TM)$
in $\pr _1^*(\xi)$, therefore as symplectic vector bundles
\begin{equation} \label{eq:symplectic}
(\xi ',d \alpha'|_{\xi'}) \cong \left(\pr _1^*(\xi), \ \pr _1^*\thinspace(d \alpha)\right) \oplus (E,d \alpha'|_E) ,
\end{equation}
where $ E = \pr _1^*(\xi )^{\perp d\alpha '}$ denotes the symplectic
complement of $\pr _1^*(\xi )$ with respect to the symplectic form
$d\alpha'|_{\xi'}$. Since projection to the second factor maps
$\pr_1^*(\xi )^{\perp d\alpha '}$ to the trivial sub-bundle tangent to
the $T^2$ fibers, we obtain the splitting described in
\eqref{eq:symplectic} above. This symplectic splitting then determines a product complex structure
whose stabilisation is homotopic to $\scxs_\acs \times \scxs_{T^2}$.
\end{proof}

The final ingredient we need in the proof of Theorem~\ref{thm:main} is the following result of Eliashberg~\cite{Eliashberg}, which realises certain cobordisms via what have become known as Weinstein handle attachments~\cite{Weinstein}.
\begin{Proposition}[\cite{Eliashberg}]\label{prop:wein}
Suppose that $(Y,J)$ is a compact $(2q+2)$-dimensional almost complex cobordism from $M_1$ to $M_2$, where $M_1$ and $M_2$ are closed manifolds. Suppose furthermore that 
$q\geq 2$ and $Y$ admits a handle 
decomposition with handles of indices $\leq q+1$, and $M_1$ admits a 
contact structure with induced almost contact structure being equal to the restriction of
$J$ along $M_1$. Then, the manifold $M_2$ admits a contact structure. \qed
\end{Proposition}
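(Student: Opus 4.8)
The plan is to realize the smooth cobordism $Y$ as a \emph{Weinstein cobordism} built by attaching the handles of the given decomposition one at a time, so that the upper boundary of each partial cobordism carries a contact structure; the desired contact structure on $M_2$ is then the one at the end of this process. The basic local object is the model Weinstein handle of index $k$: the standard symplectic ball $(\R^{2q+2}, \omega_{\mathrm{std}})$ carries a radial Liouville field, and a handle $H_k$ of index $k \le q+1$ can be carved out so that it is attached to a contact boundary along a tubular neighbourhood of an isotropic $(k-1)$-sphere, after which the new boundary is again contact. The real content is that the \emph{smooth} attaching data of $Y$ can be promoted to the \emph{isotropic/Legendrian} attaching data required by these handles, compatibly with $J$.

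First I would set up the induction. Beginning from the given contact structure $\xi_1 = \ker\alpha_1$ on $M_1$, whose almost contact structure agrees with $J|_{M_1}$, suppose the handles below a given index have been attached as Weinstein handles, producing a partial Weinstein cobordism with contact upper boundary $(N,\eta)$ and with an almost complex structure homotopic rel $M_1$ to the restriction of $J$. To attach the next $k$-handle I must represent its attaching sphere $S^{k-1}$ as an isotropic sphere in $(N,\eta)$ whose conformal symplectic normal framing is the one prescribed by $J$; Eliashberg's handle attachment then yields a new Weinstein cobordism with contact upper boundary, over whose handle $J$ extends because the Weinstein handle is modelled on the standard complex $k$-handle.

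For the subcritical handles, where $k-1 < q$, the attaching sphere is isotropic of less than Legendrian dimension, and such spheres are flexible: Gromov's $h$-principle for isotropic embeddings lets one $C^0$-approximate the smooth attaching sphere by an isotropic one carrying the framing dictated by $J$, since in this range the formal data is unobstructed. The hard part is the \emph{critical} handles, $k = q+1$, whose attaching spheres are Legendrian of dimension $q$. Here no general-position argument is available, and one must invoke Eliashberg's existence result, which converts a \emph{formal} Legendrian embedding --- an embedded sphere together with a homotopy of its tangential Gauss map into the Lagrangian Grassmannian --- into a genuine Legendrian one. The formal Legendrian data is exactly what the extension of $J$ across the critical handle supplies. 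This is the deep input of the proposition, and the hypothesis $q \ge 2$, equivalently $\dim_{\C} Y = q+1 > 2$, is precisely the range in which Eliashberg's flexibility for critical handles holds.

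Completing the induction through all handles of index $\le q+1$ produces a Weinstein cobordism structure on $Y$ whose upper boundary $M_2$ inherits a contact structure, which is the conclusion. I expect the critical-index step to be the main obstacle: everything below it is soft general position, whereas the Legendrian realization at the middle dimension is the genuinely hard, and genuinely Eliashberg, part of the argument.
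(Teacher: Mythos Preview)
The paper does not prove this proposition at all: note the \qed\ immediately following the statement, with no proof environment. Proposition~\ref{prop:wein} is simply quoted from Eliashberg~\cite{Eliashberg} (and Weinstein~\cite{Weinstein}) as a black box, so there is nothing to compare your argument against within the paper itself.

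That said, what you have written is a reasonable outline of Eliashberg's own proof in~\cite{Eliashberg}: one promotes the smooth handle decomposition to a Weinstein one by inductively isotoping each attaching sphere to an isotropic sphere with the correct conformal symplectic normal framing. The subcritical case is handled by the $h$-principle for subcritical isotropic embeddings, and the critical case $k=q+1$ is the substantive step, requiring Eliashberg's existence theorem for Legendrian spheres realizing prescribed formal data, valid precisely when the complex dimension $q+1$ exceeds $2$. One minor inaccuracy: the Liouville field on the model critical handle is not the radial one but a hyperbolic one, expanding along the coisotropic cocore directions and contracting along the isotropic core; this is what makes the negative boundary concave and the attaching region a neighbourhood of an isotropic sphere. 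Otherwise your sketch is faithful to~\cite{Eliashberg}.
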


With these preparatory results at our disposal, we now turn to the
proofs of Theorems~\ref{thm:main} and \ref{thm:main_rel}.
\begin{proof}[Proof of Theorem~\ref{thm:main}]
For $q=0$ the manifold $M$ is diffeomorphic to $S^1$, and $S^1\times S^2$ is 
known to admit a contact structure. 
Consider now a closed contact manifold $(M, \xi )$ of dimension $2q+1\geq 3$,
and apply the result of
Bourgeois~\cite{bourgeois} to equip $M \times T^2$ with a contact
structure.  By Proposition~\ref{prop:bour} the almost contact structure
induced by this contact structure on $M \times T^2$ is $ \scxs_{\acs} \times  \scxs_{T^2}$ considered in Section~\ref{sec:complex}.

Now consider the cobordism $Y$ given by Proposition~\ref{prop:first}. The stable complex
structure may be destabilised to an almost complex structure, which extends
the almost contact structure given by the the contact structure constructed by Bourgeois 
on $M\times T^2$ (and is described in Proposition \ref{prop:bour}). Indeed, since $Y$
is given by attaching handles of index at most $q+2$ to $M\times T^2$,
the obstruction for extending the stable complex structure
from $M\times T^2$ coincides with the obstruction for extending the
(unstable) complex structure. (This last claim follows from
the fact that the embedding $SO(2q+4)/U(q+2)\to SO/U$ induces isomorphisms on the
homotopy groups of dimensions $i\leq q+1\leq 2q+2$.)

By Proposition~\ref{prop:second}
the cobordism $Y$ has a handle decomposition with handles of
index at most $ q+2$ (as a cobordism built on $M\times T^2$) and it admits an almost complex structure extending
the one on $M\times T^2$ supporting a contact structure, 
therefore Proposition~\ref{prop:wein} implies the claimed existence result.
\end{proof}
\begin{proof}[Proof of Theorem~\ref{thm:main_rel}]
To prove the relative case we consider the cobordism $\widehat{W}$
given by removing a ball from the solid torus $D^2 \times S^1$ and
removing an open neighbourhood of an embedded arc joining the boundary
components and intersecting $S^2$ in the point $p$. The product $M
\times \widehat{W}$ is then a cobordism between $M \times (S^2 - D^2)$ 
and $M \times (T^2 - D^2)$ and there is a
natural map $\widehat{W} \to (S^2 - D^2)$ that is a
diffeomorphism on the boundary component corresponding to $S^2 - D^2$.

Then the same argument shows that Proposition~\ref{prop:first}
holds when $S^2$ and $T^2$ are replaced by $S^2 - D^2$ and $T^2
- D^2$, respectively. Moreover, the results of Wall
\cite{Wall} apply to cobordisms between manifolds with boundary, when the cobordism between the boundaries of the boundary manifolds is
a product, which is the case for the manifold $\widehat{Y}$ that is obtained
from $M \times \widehat{W}$ via surgery as in Proposition
\ref{prop:first}. Thus the argument of Proposition \ref{prop:second}
applies and we conclude that
$M \times (S^2 - D^2)$ can be obtained from $M \times (T^2 - D^2)$ via handle
attachments of index at most $q+2$. Now we glue in a copy of $M \times D^2 \times [0,1]$ along part of the boundary of $\widehat{Y}$ to obtain a bordism from $M \times T^2$ to $M \times S^2$. In order to realise these handle
attachments via Weinstein handles~\cite{Weinstein}, one must first apply an
$h$-principle to find an isotopy of the spheres to attaching 
spheres that are isotropic. Since
this can be done in a $C^0$-small fashion
(cf.\ \cite{Cieliebak&Eliashberg12}), we see that all the Weinstein
handles can be attached along spheres that are disjoint from $M\times
\{ p\}$, where $ p \in D^2 \subset S^2$. Finally, the contact structure found by Bourgeois on $M\times
T^2$ has the additional property that for any $p\in T^2$ the
submanifold $M\times \{ p \}$ is contact and contactomorphic to $(M, \xi )$. Since contact surgery preserves the contact structure outside a small neighbourhood of the surgery sphere, the result follows.
\end{proof}
\section{Final remarks}
\label{sec:further}
We point out that one can actually show
that all almost contact structures on $M\times S^2$ inducing 
the stable complex structure $\scxs_\acs \times \scxs_{S^2}$ 
admit contact structures: the details will appear in \cite{BCS}.  
%
In addition, the arguments used to
prove the existence of a contact structure on $M \times S^2$ actually
show the following:
\begin{Theorem}\label{thm:further}
Suppose that $(M, \xi )$ is a closed contact manifold inducing the stably complex manifold 
$(M, \scxs_\acs)$, and that  
$(X^{2k}, \scxs_X)$ is a closed stably complex manifold. Suppose furthermore
that  $(X, \scxs_X)$ satisfies the following conditions:
\begin{itemize}
\item There is a closed stably complex manifold 
$(Z^{2k}, \scxs_Z)$ and a stably complex cobordism $(W, \scxs_W)$ between $(X, \scxs_X)$ and $(Z, \scxs_Z)$ 
which admits a map 
$c\colon W\to X$ restricting to a diffeomorphism along $X\subset W$.
\item The product manifold $M \times Z$ admits a contact structure $\xi'$ compatible with $\scxs_\acs \times \scxs_Z$. 
\end{itemize}
Then $M\times X^{}$ admits a contact structure compatible with $\scxs_\acs \times \scxs_X$.
Moreover, the contact structure on $M\times X$ can be chosen 
in such a way that for a fixed point $x\in X$ the submanifold $M\times \{ x\}$ is a
contact submanifold and the natural projection restricted to 
$M\times \{ x\}$ is a contactomorphism to $(M, \xi )$. \qed
\end{Theorem}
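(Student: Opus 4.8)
The plan is to re-run the surgery machine behind Theorems~\ref{thm:main} and~\ref{thm:main_rel}, replacing the explicit model $W=(D^2\times S^1)-\mathrm{Int}(D^3)$, $S^2$ and $T^2$ by the abstract data $(W,\scxs_W)$, $X$ and $Z$ of the hypotheses and keeping track of the numerical bookkeeping $q\mapsto q+k$. First I would form the stably complex manifold $(M\times W,\ \scxs_\acs\times\scxs_W)$, which by construction is a bordism from $(M\times Z,\ \scxs_\acs\times\scxs_Z)$ to $(M\times X,\ \scxs_\acs\times\scxs_X)$, and set $g:=\Id_M\times c\colon M\times W\to M\times X$. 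As in Section~\ref{sec:complex}, the compatibility $c^*\scxs_X\simeq\scxs_W$ of stable complex structures (which we take as part of the data, just as it was verified for the model in Section~\ref{sec:complex}) guarantees that $g$ is covered by a bundle map $\bar g$ of stable \emph{complex} tangent bundles, restricting over the two ends to $\bar f_1$, covering the diffeomorphism $f_1=\Id_M\times c|_{M\times X}$, and $\bar f_0$, covering $f_0=\Id_M\times c|_{M\times Z}$. Since $X\hookrightarrow W$ splits $c$, the map $g_*$ is onto on $\pi_1$, which seeds the surgery induction.

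Next I would establish the analogues of Propositions~\ref{prop:first} and~\ref{prop:second}, whose proofs go through word for word. As $\dim(M\times W)=2q+2k+2$, surgery below the middle dimension on the interior of $M\times W$ produces a bordism $Y$, a map $g_Y\colon Y\to M\times X$ that is a $(q+k+1)$-equivalence, and a stable tangential bundle map $\bar g_Y\colon\tau_Y\to\tau_{M\times X}$ covering it and restricting to $\bar f_0,\bar f_1$ on $\partial Y$; thus $(Y,\scxs_Y)$ is a stably complex bordism from $(M\times Z,\ \scxs_\acs\times\scxs_Z)$ to $(M\times X,\ \scxs_\acs\times\scxs_X)$. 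The argument of Proposition~\ref{prop:second} then shows $Y$ is geometrically $(q+k)$-connected as a cobordism built on $M\times X$, so dually it has a handle decomposition relative to $M\times Z$ with handles of index at most $q+k+1$.

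With the handle bound in place I would destabilise. Because $SO(2(q+k+1))/U(q+k+1)\hookrightarrow SO/U$ induces isomorphisms on $\pi_i$ for $i\le 2(q+k)$, and the handles have index at most $q+k+1$, the obstructions to extending the stable and the unstable complex structures coincide; hence $\scxs_Y$ destabilises to an almost complex structure $J$ on $Y$ whose restriction to $M\times Z$ is the almost contact structure underlying $\scxs_\acs\times\scxs_Z$. I would then apply Eliashberg's Proposition~\ref{prop:wein} with its internal parameter set to $q+k$: the cobordism $Y$ has dimension $2(q+k)+2$ and handles of index at most $(q+k)+1$, and its lower end $M\times Z$ already carries the contact structure $\xi'$, whose induced almost contact structure is the restriction of $J$. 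Provided $q+k\ge 2$ (the finitely many low cases being handled directly, as $M\cong S^1$ when $q=0$), this yields a contact structure on $M\times X$ compatible with $\scxs_\acs\times\scxs_X$.

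For the \emph{moreover} statement I would follow the proof of Theorem~\ref{thm:main_rel}. Fixing $x\in X$ and a point $z\in Z$, I would choose an arc in $W$ joining the two boundary components and meeting $X$ in $x$, remove neighbourhoods of this arc and of suitable balls, and run the relative version of the above; since the boundary cobordism is then a product, Wall's results apply to the resulting cobordism of manifolds with boundary. After re-gluing a copy of $M\times D^{2k}\times[0,1]$ and converting the handle attachments into Weinstein handles, an $h$-principle isotopes the attaching spheres to isotropic ones by a $C^0$-small isotopy (cf.~\cite{Cieliebak&Eliashberg12}), so that they are disjoint from $M\times\{x\}$; as contact surgery is supported near the surgery spheres, the fibre $M\times\{x\}$ then inherits a contact structure contactomorphic to $(M,\xi)$. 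I expect this relative step to be the main obstacle, since it forces one both to keep every surgery away from the distinguished fibre and to ensure that the preserved fibre is genuinely contactomorphic to $(M,\xi)$ --- the latter, as in Bourgeois's construction, requiring that $\xi'$ restrict to such a contact structure on $M\times\{z\}$.
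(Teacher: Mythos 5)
Your proposal is correct and follows exactly the route the paper intends: the paper offers no separate argument for Theorem~\ref{thm:further}, merely asserting that the proofs of Theorems~\ref{thm:main} and~\ref{thm:main_rel} generalise, and your rewriting of Propositions~\ref{prop:first}, \ref{prop:second} and the destabilisation/Weinstein steps with the bookkeeping $q\mapsto q+k$ is precisely that generalisation. You also rightly flag the two points the statement leaves implicit --- the compatibility $c^*\scxs_X\simeq\scxs_W$ needed to seed the tangential surgery, and the requirement that $\xi'$ restrict to a copy of $(M,\xi)$ on some fibre $M\times\{z\}$ for the \emph{moreover} clause --- which is a useful clarification rather than a deviation.
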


\begin{Example}
It is not hard to see that the manifold $X=S^{i_1}\times \ldots \times S^{i_n}$ with 
$\sum _{j=1}^ni_j=2k$ satisfies the conditions of Theorem~\ref{thm:further}, when one further chooses
$Z^{2k}$ to be the $2k$-dimensional torus $T^{2k}$.
\end{Example}



\begin{thebibliography}{999}
\bibitem{bourgeois} F. Bourgeois, {\em Odd dimensional tori are
  contact manifolds}, Int. Math. Res. Not. (2002), 1571--1574.

\bibitem{BCS} J. Bowden, D. Crowley and A. Stipsicz, 
{\em On the topology of Stein fillable manifolds in higher dimensions},
arXiv:1306.2746.

\bibitem{CPP} R. Casals, D. Pancholi and F. Presas,
{\em Almost contact 5-folds are contact}, arXiv:1203.2166.

\bibitem{Cieliebak&Eliashberg12} K. Cieliebak and Y. Eliashberg, {\em
  {F}rom Stein to Weinstein and Back: Symplectic geometry of affine
  complex manifolds}, American Mathematical Society Colloquium Publications, 
  \textbf{59}, American Mathematical Society, Providence, RI, 2012.

\bibitem{Eliashberg} Y. Eliashberg, {\em Topological
  characterization of Stein manifolds of dimension} $>2$,
  Internat. J. Math. \textbf{1} (1990), 29--46.

\bibitem{Etnyre} J. Etnyre, {\em Contact structures on 5-manifolds}, arXiv:1210.5208.

\bibitem{geiges} H. Geiges, {\em Contact topology in dimension greater than 
three}, European Congress of Mathematics, Vol. II (Barcelona, 2000),
535--545, Progr. Math. {\bf202}, Birkh\"auser, Basel, 2001.

\bibitem{Giroux} E. Giroux,
{\em G\'eom\'etrie de contact: 
de la dimension trois vers les dimensions sup\'erieures},
Proceedings of the International Congress of Mathematicians, Vol. II
(Beijing, 2002), 405--414, Higher Ed. Press, Beijing, 2002.

\bibitem{HW}
B. Hajduk and  R.  Walczak,
{\em Constructions of contact forms on products and piecewise fibred manifolds},
 arXiv:1204.1692. 

\bibitem{Kreck99} M.~Kreck, {\em Surgery and Duality}, Ann. of
  Math. {\bf 149} (1999), 707--754.
  
\bibitem{Lueck} W.~L\"{u}ck, {\em A basic introduction to surgery theory}, ICTP Lecture Notes Series 9, Band 1,  ``School on High-dimensional manifold theory" Trieste 2001, ICTP, Trieste 2002.  Available at {\rm http://www.him.uni-bonn.de/lueck/publications.php}  
  
\bibitem{Lutz} 
R. Lutz, {\em Sur la g\'eom\'etrie des structures contact invariantes}, 
Ann. Inst. Fourier {\bf29} (1979), 283--306.

\bibitem{Martinet} 
J. Martinet, {\em Formes de contact sur les vari\'et\'es de dimension 3}, 
Proc. Liverpool Singularities Sympos. II, Lecture Notes in Math.
{\bf209}, Springer, Berlin (1971), 142--163.



\bibitem{Wall} C. T. C. Wall, {\em Geometrical connectivity. I},
  J. London Math. Soc. {\bf3} (1971), 597--604.

\bibitem{Weinstein} A. Weinstein, {\em Contact surgery and symplectic
    handlebodies}, Hokkaido Mathematical Journal {\bf20} (1991),
  241--251.

\end{thebibliography}
\end{document}